\numberwithin{equation}{section}
\theoremstyle{plain}
\newtheorem{thm}{Theorem}[section]
\newtheorem{lemma}{Lemma}[section]
\newtheorem{coro}{Corollary}[section]
\newcommand{\Bc}{\mathcal{B}}
\newcommand{\Cov}{\textnormal{Cov}}
\newcommand{\der}{\textnormal{d}}
\newcommand{\refEq}[1]{Eq.~{\eqref{#1}}}
\begin{document}

\begin{frontmatter}
\title{Analytic Posteriors for Pearson's Correlation Coefficient}
\runtitle{Analytic Correlation Posterior}

\begin{aug}
\author{\fnms{Alexander} \snm{Ly}\thanksref{t1}\ead[label=e1]{a.ly@uva.nl}},
\author{\fnms{Maarten} \snm{Marsman}\thanksref{t1}\ead[label=e2]{m.marsman@uva.nl}}
\and
\author{\fnms{Eric-Jan} \snm{Wagenmakers} \thanksref{t1} \ead[label=e3]{ej.wagenmakers@gmail.com}}
\ead[label=u1]{https://jasp-stats.org/}

\thankstext{t1}{This work was supported by the starting grant ``Bayes or Bust'' awarded by the European Research Council (Grant number: 283876). The authors thank Christian Robert, Fabian Dablander, Tom Koornwinder and an anonymous reviewer for helpful comments that improved an earlier version of this manuscript.}
\runauthor{Ly, Marsman, \& Wagenmakers}

\affiliation{University of Amsterdam}

\address{University of Amsterdam\\
Weesperplein 4\\
1018 XA  Amsterdam \\
The Netherlands\\
\phantom{E-mail:\ }\printead*{e1}}
\end{aug}

\begin{abstract}
Pearson's correlation is one of the most common measures of linear dependence. Recently, \citet{bernardo2015obayes} introduced a flexible class of priors to study this measure in a Bayesian setting. For this large class of priors we show that the (marginal) posterior for Pearson's correlation coefficient and all of the posterior moments are analytic. Our results are available in the open-source software package JASP.
\end{abstract}

\begin{keyword}[class=MSC]
\kwd[Primary ]{62H20}
\kwd{62E15}
\kwd{62F15}
\end{keyword}

\begin{keyword}
\kwd{Bivariate normal distribution}
\kwd{hypergeometric functions}
\kwd{reference priors}
\end{keyword}

\end{frontmatter}

\section{Introduction}
Pearson's product-moment correlation coefficient \( \rho \) is a measure of the linear dependency between two random variables. Its sampled version, commonly denoted by \( r \), has been well-studied by the founders of modern statistics such as Galton, Pearson, and Fisher. Based on geometrical insights \citet{fisher1915frequency, fisher1921probable} was able to derive the exact sampling distribution of \( r \), and established that this sampling distribution converges to a normal distribution as the sample size increases. Fisher's study of the correlation has lead to the discovery of variance-stabilizing transformations, sufficiency \citep{fisher1920mathematical}, and, arguably, the maximum likelihood estimator \citep{fisher1922mathematical, stigler2007epic}. %
Similar efforts were made in Bayesian statistics which focus on inferring the unknown \( \rho \) from the data that were actually observed. This type of analysis requires the statistician to (i) choose a prior on the parameters, thus, also on \( \rho \), and to (ii) calculate the posterior. Here we derive analytic posteriors for \( \rho \) given a large class of priors that include the recommendations of \citet{jeffreys1961theory}, \citet{lindley1965introduction}, \citet{bayarri1981inferencia}, and, more recently, \citet{berger2008objective} and \citet*{berger2015overall}. Jeffreys's work on the correlation coefficient can also be found in the second edition of his book \citep{jeffreys1961theory}, originally published in 1948; see \citet*{robert2009harold} for a modern re-read of Jeffreys's work. An earlier attempt at a Bayesian analysis of the correlation coefficient can be found in \citet{jeffreys1935some}. Before presenting the results, we first discuss some notations and recall the likelihood for the problem at hand. 

\section{Notation and Result}
Let \( (X_{1}, X_{2})' \) have a bivariate normal distribution with mean \( \vec{\mu}=(\mu_{1}, \mu_{2})' \) and covariance matrix 
\begin{align}
\nonumber
\Sigma = \begin{pmatrix} \sigma_{1}^{2} & \rho \sigma_{1} \sigma_{2} \\ \rho \sigma_{1} \sigma_{2} & \sigma^{2}_{2} \end{pmatrix},
\end{align}
where \( \sigma_{1}^{2}, \sigma_{2}^{2} \) are the population variances of \( X_{1} \) and \( X_{2} \), and where \( \rho \) is 
\begin{align}
\label{pearsonsRho}
\rho = {\Cov(X_{1}, X_{2}) \over \sigma_{1} \sigma_{2} }  = {E(X_{1} X_{2}) - \mu_{1} \mu_{2} \over \sigma_{1} \sigma_{2}}.
\end{align}
Pearson's correlation coefficient \( \rho \) measures the linear association between \( X_{1} \) and \( X_{2} \). In brief, the model is parametrized by the five unknowns \( \theta=(\mu_{1}, \mu_{2}, \sigma_{1}, \sigma_{2}, \rho) \). 

Bivariate normal data consisting of \( n \) pairs of observations can be sufficiently summarized as \( y=(n, \bar{x}_{1}, \bar{x}_{2}, s_{1}, s_{2}, r) \), where 
\begin{align}
\nonumber
r=  {\sum_{j=1}^{n} (x_{1j} - \bar{x}_{1}) (x_{2j} - \bar{x}_{2}) \over n s_{1} s_{2}},
\end{align}
is the sample correlation coefficient, \( \bar{x}_{i}=\tfrac{1}{n} \sum_{j=1}^{n} x_{ij} \) the sample mean and \( s_{i}^{2} = \tfrac{1}{n} \sum_{j=1}^{n} (x_{ij} - \bar{x}_{i})^{2}  \) the average sums of squares. The bivariate normal model implies that the observations \( y \) are functionally related to the parameters by the following likelihood function %
\begin{align}
\nonumber
f(y  \, | \, \theta) =  & \big ( 2 \pi \sigma_{1} \sigma_{2} \sqrt{1 - \rho^{2}} \big ) ^{-n} \exp \big ( - \tfrac{n}{2 (1-\rho^{2})}  \big [ \tfrac{(\bar{x}_{1} - \mu_{1})^{2}}{\sigma_{1}^{2}}  - 2 \rho \tfrac{(\bar{x}_{1} - \mu_{1})(\bar{x}_{2} - \mu_{2})}{\sigma_{1} \sigma_{2}} +  \tfrac{(\bar{x}_{2} - \mu_{2})^{2}}{\sigma_{2}^{2}}  \big] \big ) \\
\label{likelihoodCorM1}
& \times \exp \big ( - \tfrac{n}{2(1-\rho^{2})}  \Big [ \big ( \tfrac{s_{1}}{\sigma_{1}} \big )^{2}   - 2 \rho \big ( \tfrac{r s_{1} s_{2}}{\sigma_{1} \sigma_{2} } \big ) + \big ( \tfrac{s_{2}}{\sigma_{2}} \big )^{2} \big ] \Big ). 
\end{align}
For inference, we use the following class of priors %
\begin{align}
\label{corPrior}
\pi_{\eta}(\theta) \propto \underbrace{(1- \rho^{2})^{\alpha-1} (1+\rho^{2})^{\tfrac{\beta}{2}}}_{\pi_{\alpha, \beta}(\rho)} \underbrace{\sigma_{1}^{\gamma-1}}_{\pi_{\gamma}(\sigma_{1})} \underbrace{\sigma_{2}^{\delta-1}}_{\pi_{\delta}(\sigma_{2})},
\end{align}
where \( \eta \) denotes the hyperparameters, that is, \( \eta=(\alpha, \beta, \gamma, \delta) \). This class of priors is inspired by the one Jos\'{e} Bernardo used in his talk on reference priors for the bivariate normal distribution at the 11th International Workshop on Objective Bayes Methodology in honor of Susie Bayarri. This class of priors contains certain recommended priors as special cases. 

If we set \( \alpha=1, \beta=\gamma=\delta=0 \) in \refEq{corPrior}, we retrieve the prior that Jeffreys recommended for both estimation and testing \citep[pp. 174--179 and 289--292]{jeffreys1961theory}. This recommendation is \emph{not} the prior derived from Jeffreys's rule based on the Fisher information (e.g., \citealp{ly2016tutorial}), as discussed in \citet{berger2008objective}. With \( \alpha=1, \beta=\gamma=\delta=0 \), thus, a uniform prior on \( \rho \), Jeffreys showed that the marginal posterior for \( \rho \) is proportional to \( h_{a}(n, r \, | \, \rho) \), where %
\begin{align}
\nonumber
h_{\text{a}}(n, r  \, | \, \rho) = (1-\rho^{2})^{\tfrac{n-1}{2}}(1-\rho r)^{\tfrac{3-2n} {2}},
\end{align}
represents the \( \rho \) dependent part of the likelihood \refEq{likelihoodCorM1} with \( \theta_{0}=(\mu_{1}, \mu_{2}, \sigma_{1},\sigma_{2}) \) integrated out. %
For \( n \) large enough, the function \( h_{\text{a}} \) is a good approximation to the true reduced likelihood \( h_{\gamma, \delta} \), given below.%
\footnote{We thank an anonymous reviewer for clarifying how Jeffreys derived this approximation.} %

If we set \( \alpha=\beta=\gamma=\delta=0 \) in \refEq{corPrior}, we retrieve Lindley's reference prior for \( \rho \). \citet[pp 214--221]{lindley1965introduction} established that the posterior of \( \tanh^{-1}(\rho) \) is asymptotically normal with mean \( \tanh^{-1}(r) \) and variance \( n^{-1} \), which relates the Bayesian method of inference for \( \rho \) to that of Fisher. In Lindley's \citeyearpar[p. 216]{lindley1965introduction} derivation it is explicitly stated that the likelihood with \( \theta_{0} \) integrated out cannot be expressed in terms of elementary functions. In his analysis, Lindley approximates the true reduced likelihood \( h_{\gamma, \delta} \) with the same \( h_{\text{a}} \) that Jeffreys used before. \cite{bayarri1981inferencia} furthermore showed that with the choice \( \gamma=\delta=0 \) the marginalization paradox \citep{dawid1973marginalization} is avoided.

In their overview, \citet{berger2008objective} showed that for certain \( a, b \) with \( \alpha=b/2-1 \), \( \beta=0 \), \( \gamma=a-2 \) and \( \delta=b-1 \) the priors in \refEq{corPrior} correspond to a subclass of the generalized Wishart distribution. Furthermore, a right-Haar prior (e.g., \citealp{sun2007objective}) is retrieved when we set \( \alpha=\beta=0 \), \( \gamma=-1, \delta=1 \) in \refEq{corPrior}. This right-Haar prior then has a posterior that can be constructed through simulations. That is, by simulating from a standard normal distribution and two chi-squared distributions \cite[Table~{1}]{berger2008objective}. This constructive posterior also corresponds to the fiducial distribution for \( \rho \) (e.g, \citealp{fraser1961fiducial}, \citealp{hannig2006fiducial}). Another interesting case is given by \( \alpha=0, \beta=1, \gamma=\delta=0 \), which corresponds to the one-at-a-time reference prior for \( \sigma_{1} \) and \( \sigma_{2} \), see also \citet[p. 187]{jeffreys1961theory}. 

The analytic posteriors given below follow directly from exact knowledge of the reduced likelihood \( h_{\gamma, \delta}(n, r \, | \, \rho) \) rather than its approximation used in previous work. As we have not encountered this proof in earlier work, full details are given below. 

\begin{thm}[The Reduced Likelihood $h_{\gamma, \delta}(n,r \, | \, \rho)$]
If \( |r| <1 \), \( n > \gamma+1 \) and \( n > \delta +1 \), then the likelihood \( f(y \, | \, \theta) \) times the prior \refEq{corPrior} with \( \theta_{0}=(\mu_{1}, \mu_{2}, \sigma_{1}, \sigma_{2}) \) integrated out is a function \( f_{\gamma, \delta} \) that factors into %
\begin{align}
f_{\gamma, \delta}(y \, | \, \rho)=p_{\gamma, \delta}(y_{0}) h_{\gamma, \delta}(n, r  \, | \, \rho).
\end{align}
The first factor is the marginal likelihood with \( \rho \) fixed at zero, which does not depend on \( r \) nor on \( \rho \), that is,
\begin{align}
\nonumber
p_{\gamma, \delta}(y_{0}) = & \int \int \int \int f(y  \, | \, \theta_{0}, \rho=0) \pi_{\gamma}(\sigma_{1})  \pi_{\delta}(\sigma_{2})  \der \mu_{1} \der \mu_{2} \der  \sigma_{1} \der  \sigma_{2} \\
\label{marginalLikelihoodM0}
= & 2^{\tfrac{-\gamma-\delta-4}{2}} {\pi^{1-n} \over n} (n s_{1}^{2})^{\tfrac{1+\gamma-n}{2}} (n s_{2}^{2})^{\tfrac{1+\delta-n}{2}} \Gamma \big ( \tfrac{n-\gamma-1}{2} \big ) \Gamma \big ( \tfrac{n-\delta-1}{2} \big ),
\end{align}
where \( y_{0}=(n, \bar{x}_{1}, \bar{x}_{2}, s_{1}, s_{2}) \). We refer to the second factor as the reduced likelihood, a function of \( \rho \) which is given by a sum of an even and an odd function, that is, \( h_{\gamma, \delta}=A_{\gamma, \delta}+B_{\gamma, \delta} \) where
\begin{align}
 A_{\gamma, \delta}(n, r \, | \, \rho) = & (1-\rho^{2})^{\tfrac{n-\gamma-\delta-1}{2}} {_2 F}_{1} \big ( \tfrac{n-\gamma-1}{2}, \tfrac{n-\delta-1}{2} \, ; \, \tfrac{1}{2} \, ; \, r^{2} \rho^{2} \big ), \\
\label{bFunction}
B_{\gamma, \delta}(n, r \, | \, \rho) = & 2 r \rho (1-\rho^{2})^{\tfrac{n-\gamma-\delta-1}{2}} W_{\gamma, \delta}(n) {_2 F}_{1} \big ( \tfrac{n-\gamma}{2}, \tfrac{n-\delta}{2} \, ; \, \tfrac{3}{2} \, ; \, r^{2} \rho^{2} \big ),
\end{align}
where \( W_{\gamma,\delta}(n)= \Big [ \Gamma \big ( \tfrac{n-\gamma}{2} \big ) \Gamma \big ( \tfrac{n - \delta}{2} \big ) \Big ] \big / \Big [ \Gamma \big ( \tfrac{n- \gamma - 1}{2} \big ) \Gamma \big ( \tfrac{n-\delta - 1}{2} \big ) \Big ] \) and where \( _{2}F_{1} \) denotes Gauss' hypergeometric function. \( \hfill \diamond \)
\end{thm}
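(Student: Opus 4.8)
The plan is to perform the four-fold integration in two stages, first eliminating the location parameters $\mu_1,\mu_2$ and then the scale parameters $\sigma_1,\sigma_2$, and to deal with the cross term generated by $r$ through a power-series expansion that will separate cleanly into the even part $A_{\gamma,\delta}$ and the odd part $B_{\gamma,\delta}$.

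First I would integrate out $\mu_1$ and $\mu_2$. The first exponential factor in \refEq{likelihoodCorM1} is a Gaussian quadratic form in $(\mu_1,\mu_2)$ whose precision matrix is $n\Sigma^{-1}$, so the integral over $\R^2$ is standard and equals $\tfrac{2\pi}{n}\sqrt{\det \Sigma}=\tfrac{2\pi\sigma_1\sigma_2\sqrt{1-\rho^2}}{n}$. Multiplying by the prefactor $(2\pi\sigma_1\sigma_2\sqrt{1-\rho^2})^{-n}$ leaves an overall factor $(1-\rho^2)^{-(n-1)/2}$ and a power $(\sigma_1\sigma_2)^{-(n-1)}$, which then meets the second exponential and the scale prior $\sigma_1^{\gamma-1}\sigma_2^{\delta-1}$.

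Next I would substitute $u_i=s_i/\sigma_i$, turning the remaining exponent into $-\tfrac{n}{2(1-\rho^2)}(u_1^2-2\rho r\,u_1u_2+u_2^2)$ and the scale measure into $u_1^{\,n-\gamma-2}u_2^{\,n-\delta-2}\,\der u_1\,\der u_2$, up to an explicit $s_i$-dependent constant. Writing $c=\tfrac{n}{2(1-\rho^2)}$ and $\lambda=2c\rho r$, the key step is to expand the cross term $\exp(\lambda u_1u_2)=\sum_{k\ge 0}\tfrac{\lambda^k}{k!}(u_1u_2)^k$ and integrate term by term. The interchange of sum and integral is legitimate by absolute convergence, which holds because $|r\rho|<1$ makes the quadratic form $c(u_1^2+u_2^2)-|\lambda|u_1u_2$ positive definite; each resulting term then factors into two one-dimensional integrals of the form $\int_0^\infty u^{m}e^{-cu^2}\,\der u=\tfrac12 c^{-(m+1)/2}\Gamma(\tfrac{m+1}{2})$, which converge precisely under the hypotheses $n>\gamma+1$ and $n>\delta+1$.

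Finally I would split the sum over $k$ into even indices $k=2m$ and odd indices $k=2m+1$. Using $\Gamma(\tfrac a2+m)=\Gamma(\tfrac a2)(\tfrac a2)_m$ together with the Pochhammer identities $(2m)!=4^m m!\,(\tfrac12)_m$ and $(2m+1)!=4^m m!\,(\tfrac32)_m$, the powers of $c$ and $\lambda$ collapse to powers of $r^2\rho^2$: the even sum becomes $\Gamma(\tfrac{n-\gamma-1}{2})\Gamma(\tfrac{n-\delta-1}{2})\,{_2 F}_{1}(\tfrac{n-\gamma-1}{2},\tfrac{n-\delta-1}{2};\tfrac12;r^2\rho^2)$ and the odd sum becomes $2r\rho\,\Gamma(\tfrac{n-\gamma}{2})\Gamma(\tfrac{n-\delta}{2})\,{_2 F}_{1}(\tfrac{n-\gamma}{2},\tfrac{n-\delta}{2};\tfrac32;r^2\rho^2)$, both series converging since $|r|<1$ forces $r^2\rho^2<1$. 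Collecting all $\rho$-free constants (the $s_i$ powers, the $\pi^{1-n}$, the common $c$-prefactor, and the Gamma product $\Gamma(\tfrac{n-\gamma-1}{2})\Gamma(\tfrac{n-\delta-1}{2})$) into a single factor gives the factorization $f_{\gamma,\delta}=p_{\gamma,\delta}(y_0)\,h_{\gamma,\delta}$; that this factor really is $p_{\gamma,\delta}(y_0)$ is seen most cleanly by setting $\rho=0$, where $A_{\gamma,\delta}=1$ and $B_{\gamma,\delta}=0$ so that $f_{\gamma,\delta}(y\,|\,0)=p_{\gamma,\delta}(y_0)$ matches \refEq{marginalLikelihoodM0}, while the ratio of the odd to the even Gamma product is exactly $W_{\gamma,\delta}(n)$. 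I expect the recognition of the two hypergeometric series out of the even/odd split to be the crux of the argument; everything else is careful bookkeeping of the constants.
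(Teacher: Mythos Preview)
Your argument is correct, but it follows a genuinely different route from the paper's proof. After the common first step of integrating out $(\mu_1,\mu_2)$, the paper proceeds \emph{sequentially}: it integrates over $\sigma_1$ alone via the substitution $u=\sigma_1^{-1}$ and the Bateman-project identity of Lemma~\ref{batemanLemma}, which expresses the result through the parabolic cylinder function, equivalently a pair of confluent hypergeometric functions $_1F_1$; it then integrates over $\sigma_2$ using a tabulated identity (Gradshteyn--Ryzhik Eq.~7.621.4) that converts these $_1F_1$'s into the Gauss hypergeometric $_2F_1$. Your approach is \emph{symmetric and elementary}: the substitution $u_i=s_i/\sigma_i$ puts $\sigma_1$ and $\sigma_2$ on the same footing, the power-series expansion of the cross term $\exp(\lambda u_1u_2)$ reduces everything to products of Gamma integrals, and the even/odd split in $k$ produces the two $_2F_1$ series directly, with the Pochhammer identities $(2m)!=4^m m!\,(\tfrac12)_m$ and $(2m+1)!=4^m m!\,(\tfrac32)_m$ doing the bookkeeping. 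Your route is more self-contained---no appeal to parabolic cylinder functions or integral tables is needed, and it makes transparent why the answer splits as (even in $r\rho$) $+$ (odd in $r\rho$); the paper's route, by contrast, situates the computation within classical special-function theory (Weber's equation) and reuses standard transforms rather than rederiving a series identity.
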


\begin{proof}
To derive \( f_{\gamma, \delta}(y \, | \, \rho) \) we have to perform three integrals: (i) with respect to \( \pi(\mu_{1}, \mu_{2}) \propto 1 \), (ii) \( \pi_{\gamma}(\sigma_{1}) \propto \sigma_{1}^{\gamma-1} \), and (iii) \( \pi_{\delta}(\sigma_{2}) \propto \sigma_{2}^{\delta-1} \). 

\noindent (i) The integral with respect to \( \pi(\mu_{1}, \mu_{2}) \propto 1 \) yields
\begin{align}
\label{likelihoodMeansIntegratedOut}
f (y  \, | \, \sigma_{1}, \sigma_{2}, \rho) = \tfrac{\big ( 2 \pi \sqrt{1 - \rho^{2}} \sigma_{1} \sigma_{2} \big ) ^{1-n}}{n} \exp \big ( \tfrac{-n}{2(1-\rho^{2})}  \big [  \tfrac{s_{1}^{2}}{\sigma_{1}^{2}}    - 2 \rho  \tfrac{r s_{1} s_{2}}{\sigma_{1} \sigma_{2} } + \tfrac{s_{2}^{2}}{\sigma_{2}}  \big ] \big ),
\end{align}
where we abbreviated \( f(y  \, | \, \sigma_{1}, \sigma_{2}, \rho) = \int_{-\infty}^{\infty} \int_{-\infty}^{\infty} f(y  \, | \, \theta_{0}, \rho) \text{d} \mu_{1} \text{d} \mu_{2} \). The factor \( p_{\gamma, \delta}(y_{0}) \) follows directly by setting \( \rho \) to zero in \refEq{likelihoodMeansIntegratedOut} and two independent gamma integrals with respect to \( \sigma_{1} \) and \( \sigma_{2} \) resulting in \refEq{marginalLikelihoodM0}. %
These gamma integrals cannot be used when \( \rho  \) is not zero. For \( f_{\gamma, \delta}(y \, | \, \rho) \) which is a function of \( \rho \), we use results from special functions theory. 

\noindent (ii) For the second integral, we collect only that part of \refEq{likelihoodMeansIntegratedOut} that involves \( \sigma_{1} \) into a function \( g \), that is,
\begin{align}
\nonumber
\int_{0}^{\infty} g(y \, | \, \sigma_{1}) \pi_{\gamma}(\sigma_{1}) \text{d} \sigma_{1} = \int_{0}^{\infty} \sigma_{1}^{\gamma - n} \exp \big ( - \tfrac{n s_{1}^{2}}{2(1-\rho^{2})}   \tfrac{1}{\sigma_{1}^{2}}    + \tfrac{n s_{1} s_{2} }{\sigma_{2} (1-\rho^{2})} r \rho \tfrac{1}{\sigma_{1}} \big ) \text{d} \sigma_{1}. 
\end{align}
The assumption \( n > \gamma +1 \) and the substitution \( u=\sigma_{1}^{-1} \) allows us to solve this integral using Lemma~{\ref{batemanLemma}}, which we distilled from the Bateman manuscript project \citep{bateman1954integral1}, with \( a=\tfrac{ n s_{1}^{2}}{2(1- \rho^{2})}, b=-\tfrac{n s_{1} s_{2}}{(1- \rho^{2}) \sigma_{2}} r \rho \) and \( c=n-\gamma-1 \). This yields 
\begin{align}
\int_{0}^{\infty} g(y \, | \, \sigma_{1}) \pi_{\gamma}(\sigma_{1}) \text{d} \sigma_{1} = 2^{\tfrac{n-\gamma-3}{2}} \big ( \tfrac{1- \rho^{2}}{n s_{1}^{2}} \big )^{\tfrac{n - \gamma - 1}{2}} \Big [ \mathring{A}_{\gamma} + \mathring{B}_{\gamma} \Big ] ,
\end{align}
where 
\begin{align}
\mathring{A}_{\gamma} = & \Gamma \big ( \tfrac{n-\gamma-1}{2} \big ) {_1 F}_{1} \big ( \tfrac{n-\gamma-1}{2} \, ; \, \tfrac{1}{2} \, ; \,  \tfrac{ n s_{2}^{2} (r \rho)^{2}}{2(1- \rho^{2}) }  \tfrac{1}{\sigma_{2}^{2}} \big ), \\
\mathring{B}_{\gamma} = & \sqrt{\tfrac{2 n s_{2}^{2} (r \rho)^{2}}{( 1- \rho^{2}) }}  \sigma_{2}^{-1} \Gamma \big ( \tfrac{n-\gamma}{2} \big ) {_1 F}_{1} \big ( \tfrac{n-\gamma}{2} \, ; \, \tfrac{3}{2} \, ; \,  \tfrac{ n s_{2}^{2} (r \rho)^{2}}{2(1- \rho^{2}) } \tfrac{1}{\sigma_{2}^{2}} \big ),
\end{align}
and where \( {_1 F}_{1} \) denotes the confluent hypergeometric function. The functions \( \mathring{A}_{\gamma} \) and \( \mathring{B}_{\gamma} \) are the even and odd solution of Weber's differential equation in the variable \( z=(r \rho)^{2} \tfrac{ n s_{2}^{2}}{2(1- \rho^{2}) \sigma_{2}^{2}} \) respectively. 

\noindent (iii) With \( f_{\gamma}(y \, | \, \sigma_{2}, \rho)=\int_{0}^{\infty} f(y  \, | \, \sigma_{1}, \sigma_{2}, \rho) \pi_{\gamma}(\sigma_{1}) \text{d} \sigma_{1} \), we see that \( f_{\gamma, \delta}(y \, | \, \rho) \) follows from integrating  \( \sigma_{2} \) out of the following expression%
\begin{align}
\nonumber
f_{\gamma}(y \, | \, \sigma_{2}, \rho) \pi_{\delta}( \sigma_{2})  =2^{\tfrac{-n-\gamma-1}{2}} \tfrac{\pi^{1-n}}{n} (n s_{1}^{2})^{\tfrac{1+\gamma-n}{2}} (1-\rho^{2})^{\tfrac{-\gamma}{2}} \big [ \breve{A}_{\gamma}(y  \, | \, \sigma_{2}, \rho) + \breve{B}_{\gamma}(y  \, | \, \sigma_{2}, \rho) \big ],
\end{align}
where %
\begin{align}
\label{likelihoodMeansAndSigmaIntegratedOut}
\breve{A}_{\gamma} = & \Gamma \big ( \tfrac{n-\gamma - 1}{2} \big ) \overbrace{\sigma_{2}^{\delta-n} e^{  - \tfrac{ n s_{2}^{2}}{2(1- \rho^{2})} \tfrac{1}{\sigma_{2}^{2}} } \, {_{1} F}_{1} \big ( \tfrac{n-\gamma -1}{2} \, ; \, \tfrac{1}{2} \, ; \,  (r \rho)^{2} \tfrac{ n s_{2}^{2}}{2(1- \rho^{2})}  \tfrac{1}{\sigma_{2}^{2}} \big )}^{k(n, r  \, | \, \rho, \sigma_{2})},  \\
\nonumber
\breve{B}_{\gamma} = & (\tfrac{2 n s_{2}^{2}}{1-\rho^{2}})^{\tfrac{1}{2}} r \rho \Gamma \big ( \tfrac{n-\gamma}{2} \big ) \underbrace{\sigma_{2}^{\delta-n -1} e^{ - \tfrac{ n s_{2}^{2}}{2(1- \rho^{2})} \tfrac{1}{\sigma_{2}^{2}} } \, {_{1} F}_{1} \big ( \tfrac{n-\gamma}{2} \, ; \, \tfrac{3}{2} \, ; \, (r \rho)^{2} \tfrac{ n s_{2}^{2}}{2(1- \rho^{2}) }  \tfrac{1}{\sigma_{2}^{2}} \big )}_{l(n, r  \, | \, \rho, \sigma_{2})}.
\end{align}
Hence, the last integral with respect to \( \sigma_{2} \) only involves the functions \( k \) and \( l \) in \refEq{likelihoodMeansAndSigmaIntegratedOut}. The assumption \( n > \delta +1 \) and the substitution \( t=\frac{n s_{2}^{2}}{2 (1- \rho^{2})} \sigma_{2}^{-2} \), thus, \( \text{d} \sigma_{2} = - \tfrac{1}{2} \sqrt{ \frac{n s_{2}^{2}}{2 ( 1 - \rho^{2})}} t^{-\tfrac{3}{2}} \der  t \) allows us to solve this integral using Eq.~{7.621.4} from \citet[p. 822]{gradshteyn2007table} with \( s=1 \), \( \tilde{k}=(r \rho)^{2} \). This yields %
\begin{align}
\nonumber
\int_{0}^{\infty} k(n, r  \, | \, \rho, \sigma_{2}) \text{d} \sigma_{2} = & 2^{\tfrac{n-\delta-3}{2}} \big ( \tfrac{1-\rho^2}{n s_{2}^2} \big )^{\tfrac{n-\delta-1}{2}} \Gamma \big ( \tfrac{n-\delta  -1}{2} \big ) {_2 F}_{1} \big ( \tfrac{n-\gamma -1}{2}, \tfrac{n-\delta - 1}{2} \, ; \, \tfrac{1}{2} \, ; \, r^{2} \rho^{2} \big ), \\
\nonumber
\int_{0}^{\infty} l(n, r  \, | \, \rho, \sigma_{2}) \text{d} \sigma_{2} = & 2^{\tfrac{n-\delta -2}{2}} \big ( \tfrac{1- \rho^2}{n s_{2}^2} \big )^{\tfrac{n-\delta}{2}} \Gamma \big ( \tfrac{n-\delta}{2} \big ) {_2 F}_{1} \big ( \tfrac{n-\gamma}{2}, \tfrac{n-\delta}{2} \, ; \, \tfrac{3}{2} \, ; \, r^{2} \rho^{2} \big ).
\end{align}
After we combine the results we see that \( f_{\gamma, \delta}(y  \, | \, \rho)=\tilde{A}_{\gamma, \delta}(y \, | \, \rho) + \tilde{B}_{\gamma, \delta}(y  \, | \, \rho) \), where 
\begin{align}
\nonumber
\frac{\tilde{A}_{\gamma, \delta}(y \, | \, \rho)}{p_{\gamma, \delta}(y_{0})}= & (1-\rho^{2})^{\tfrac{n-\gamma-\delta-1}{2}} {_2 F}_{1} \big ( \tfrac{n-\gamma-1}{2}, \tfrac{n-\delta-1}{2} \, ; \, \tfrac{1}{2} \, ; \, r^{2} \rho^{2} \big ), \\
\nonumber
\frac{\tilde{B}_{\gamma, \delta}(y \, | \, \rho)}{ p_{\gamma, \delta}(y_{0})} = & 2 r \rho (1-\rho^{2})^{\tfrac{n-\gamma-\delta-1}{2}} W_{\gamma, \delta}(n) {_2 F}_{1} \big ( \tfrac{n-\gamma}{2}, \tfrac{n-\delta}{2} \, ; \, \tfrac{3}{2} \, ; \, r^{2} \rho^{2} \big ).
\end{align}
Hence, \( f_{\gamma, \delta}(y \, | \, \rho) \) is of the asserted form. Note that \( A_{\gamma, \delta} =\frac{\tilde{A}_{\gamma, \delta}(y \, | \, \rho)}{ p_{\gamma, \delta}(y_{0})}  \) is even, while \( \frac{\tilde{B}_{\gamma, \delta}(y \, | \, \rho)}{ p_{\gamma, \delta}(y_{0})} \) is an odd function of \( \rho \).
\end{proof}

This main theorem confirms Lindley's insights; \( h_{\gamma, \delta}(n,r \, | \, \rho) \) is indeed not expressible in terms of elementary functions and the prior on \( \rho \) is updated by the data only through its sampled version \( r \) and the sample size \( n \). As a result, the marginal likelihood for data \( y \) then factors into \( p_{\eta}(y) = p_{\gamma, \delta}(y_{0}) p^{\gamma, \delta}_{\alpha, \beta}(n, r) \), where \( p^{\gamma, \delta}_{\alpha, \beta}(n, r) = \int h_{\gamma, \delta}(n, r \, | \, \rho) \pi_{\alpha, \beta}(\rho) \der \rho \) is the normalizing constant of the marginal posterior of \( \rho \). More importantly, the fact that the reduced likelihood is the sum of an even and an odd function allows us to fully characterize the posterior distribution of \( \rho \) for the priors \refEq{corPrior} in terms of its moments. These moments are easily computed, as the prior \( \pi_{\alpha, \beta}(\rho) \) itself is symmetric around zero. Furthermore, the prior \( \pi_{\alpha, \beta}(\rho) \) can be normalized as 
\begin{align}
\label{priorRho}
 \pi_{\alpha, \beta}(\rho) =  {(1-\rho^{2})^{\alpha-1} (1+\rho^{2})^{\tfrac{\beta}{2}} \over \Bc \big (\tfrac{1}{2}, \alpha \big ) {_{2} F}_{1}   \big ( - \tfrac{\beta}{2}, \tfrac{1}{2}  \, ; \,  \tfrac{1}{2} + \alpha   \, ; \, -1  \big )}, 
\end{align}
where \( \Bc(u, v) = \frac{\Gamma ( u) \Gamma (v)}{ \Gamma ( u+ v)} \) denotes the beta function. The case with \( \beta=0 \) is also known as the (symmetric) stretched beta distribution on \( (-1,1) \) and leads to Lindley's reference prior when we ignore the normalization constant, i.e., \( \Bc(\tfrac{1}{2}, \alpha) \), and, subsequently, let \( \alpha \rightarrow 0 \). 

\begin{coro}[Characterization of the Marginal Posteriors of $\rho$]
If \( n > \gamma +\delta - 2\alpha+1 \), then the main theorem implies that whenever the marginal likelihood with all the parameters integrated out factors as \( p_{\eta}(y)=p_{\gamma, \delta}(y_{0}) p^{\gamma, \delta}_{\alpha, \beta}(n,r) \), where 
\begin{align}
p^{\gamma, \delta}_{\alpha, \beta}(n,r) = \int_{-1}^{1} h_{\gamma, \delta}(n, r  \, | \, \rho)  \pi_{\alpha, \beta}(\rho) \text{d} \rho =  \int_{-1}^{1} A_{\gamma, \delta} (n, r  \, | \, \rho) \pi_{\alpha, \beta}(\rho) \text{d} \rho ,
\end{align}
defines the normalizing constant of the marginal posterior for \( \rho \). Observe that the integral involving \( B_{\gamma, \delta} \) is zero, because \( B_{\gamma, \delta} \) is odd on \( (-1,1) \). More generally, the \( k \)th posterior moment of \( \rho \) is 
\begin{align}
E(\rho^{k}  \, | \, n, r) = 
\begin{cases} \frac{1}{ p^{\gamma, \delta}_{\alpha, \beta}(n,r) } \int \limits_{-1}^{1} \rho^{k} A_{\gamma, \delta}(n, r  \, | \, \rho) \pi_{\alpha, \beta}(\rho) \text{d} \rho & \mbox{ if } k \text{ is even}, \\
\frac{1}{ p^{\gamma, \delta}_{\alpha, \beta}(n,r) } \int \limits_{-1}^{1}  \rho^{k} B_{\gamma, \delta}(n, r  \, | \, \rho) \pi_{\alpha, \beta}(\rho) \text{d} \rho & \mbox{ if }  k \text{ is odd}.
\end{cases}
\end{align}
These posterior moments define the series 
\begin{align}
\label{posteriorMoments}
 E(\rho^{k}  \, | \, n, r) = 
 \begin{cases}
{1 \over C_{\alpha, \beta}} \sum \limits_{m=0}^{\infty} \tfrac{ \big (\tfrac{n- \gamma -1}{2} \big )_{m}  \big (\tfrac{n- \delta -1}{2} \big )_{m}}{ \big (\tfrac{1}{2} \big )_{m} m!} a_{k, m} r^{2 m} & \text{ if } k \text{ is even}, \\
{2 W_{\gamma, \delta}(n) \over C_{\alpha, \beta}} \sum \limits_{m=0}^{\infty} \tfrac{ \big (\tfrac{n- \gamma }{2} \big )_{m}  \big (\tfrac{n- \delta}{2} \big )_{m}}{ \big (\tfrac{3}{2} \big )_{m} m! } b_{k, m} r^{2 m+1} & \text{ if } k \text{ is odd}, \\
 \end{cases}
\end{align}
where \( C_{\alpha, \beta}=\Bc  (\tfrac{1}{2}, \alpha   ) {_{2} F}_{1}  ( \tfrac{-\beta}{2}, \tfrac{1}{2}  \, ; \,  \alpha + \tfrac{1}{2}  \, ; \, -1  ) \) is the normalization constant of the prior \refEq{priorRho}, \( W_{\gamma, \delta}(n) \) is the ratios of gamma functions as defined under \refEq{bFunction} and \( (x)_{m} = {\Gamma(x+m) \over \Gamma (x)}= x (x+1) (x+2) \ldots (x+m-1)  \) refers to the Pochhammer symbol for rising factorials. The terms \( a_{k, m} \) and \( b_{k,m} \) are 
\begin{align}
\nonumber
a_{k, m} & =  { \Bc \big ( \tfrac{1}{2} + \tfrac{k+2 m}{2} ,  \alpha+ \tfrac{ n - \gamma -\delta -1}{2} \big ) {_{2} F}_{1}  \big ( \tfrac{-\beta}{2}, \tfrac{k+2 m +1}{2}  \, ; \,   \tfrac{k+2 m + 2 \alpha + n - \gamma - \delta}{2}  \, ; \, -1 \big  )} , \\
\nonumber
b_{k, m} & = {\Bc \big ( \tfrac{1}{2} + \tfrac{k + 2 m + 1}{2}, \alpha + \tfrac{ n - \gamma - \delta -1}{2} \big ) {_{2} F}_{1}  \big (  \tfrac{-\beta}{2}, \tfrac{k+2 m +2}{2}  \, ; \,   \tfrac{k+2 m + 2 \alpha + n - \gamma - \delta+1}{2}  \, ; \, -1 \big  ) } .
\end{align}
The series defined in \refEq{posteriorMoments} are hypergeometric when \( \beta \) is a non-negative integer. \( \hfill \diamond \)
\end{coro}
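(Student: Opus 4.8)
The plan is to build everything on the even/odd split \( h_{\gamma,\delta}=A_{\gamma,\delta}+B_{\gamma,\delta} \) furnished by the main theorem and then to integrate the Gauss series of \( A_{\gamma,\delta} \) and \( B_{\gamma,\delta} \) against the prior term by term. First I would dispose of the factorization and the parity bookkeeping, which are immediate. Because the main theorem gives \( f_{\gamma,\delta}(y\mid\rho)=p_{\gamma,\delta}(y_{0})\,h_{\gamma,\delta}(n,r\mid\rho) \) with \( p_{\gamma,\delta}(y_{0}) \) independent of \( \rho \), multiplying by \( \pi_{\alpha,\beta}(\rho) \) and integrating over \( (-1,1) \) pulls \( p_{\gamma,\delta}(y_{0}) \) outside the \( \rho \)-integral and yields \( p_{\eta}(y)=p_{\gamma,\delta}(y_{0})\,p^{\gamma,\delta}_{\alpha,\beta}(n,r) \). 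Since \( \pi_{\alpha,\beta} \) is a function of \( \rho^{2} \) it is even, \( A_{\gamma,\delta} \) is even and \( B_{\gamma,\delta} \) is odd; hence \( \rho^{k}A_{\gamma,\delta}\pi_{\alpha,\beta} \) inherits the parity of \( \rho^{k} \) and \( \rho^{k}B_{\gamma,\delta}\pi_{\alpha,\beta} \) the opposite parity, so over the symmetric interval precisely one of the two integrals survives according as \( k \) is even or odd. This already gives the case split in the moment formula and the vanishing of the \( B \)-integral in the normalizing constant.

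Next comes the main computation. I would expand the \( {}_2F_1 \) in \( A_{\gamma,\delta} \) (resp.\ \( B_{\gamma,\delta} \)) as its Gauss series in \( r^{2}\rho^{2} \), merge the prefactor \( (1-\rho^{2})^{(n-\gamma-\delta-1)/2} \) with the prior's \( (1-\rho^{2})^{\alpha-1} \), and integrate each term. After folding to \( [0,1] \) and substituting \( t=\rho^{2} \), the \( m \)-th term reduces to
\[ \int_{0}^{1} t^{(k+2m-1)/2}(1-t)^{\alpha+(n-\gamma-\delta-1)/2-1}(1+t)^{\beta/2}\,\der t \]
in the even case, the exponent of \( t \) being \( (k+2m)/2 \) in the odd case. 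The key move is to read \( (1+t)^{\beta/2}=(1-(-1)t)^{-(-\beta/2)} \) and recognize this as Euler's integral representation of \( {}_2F_1 \) at argument \( -1 \); matching \( b=(k+2m+1)/2 \) and \( c-b=\alpha+(n-\gamma-\delta-1)/2 \) produces exactly the beta factor \( \Bc \) and the \( {}_2F_1(-\beta/2,\cdot\,;\cdot\,;-1) \) of \( a_{k,m} \) (resp.\ \( b_{k,m} \), where the half-unit exponent shift supplies the parameter \( (k+2m+2)/2 \)). Summing over \( m \) reassembles the series, with the normalized prior contributing the factor \( 1/C_{\alpha,\beta} \) and the prefactor of \( B_{\gamma,\delta} \) contributing the \( 2W_{\gamma,\delta}(n) \) in the odd case; the \( k=0 \) even instance is \( p^{\gamma,\delta}_{\alpha,\beta}(n,r) \), and taking \( m=k=0 \) with the \( (1-\rho^{2})^{(n-\gamma-\delta-1)/2} \) factor removed recovers the prior normalization \refEq{priorRho}, a useful sanity check.

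Two points require care. The hypothesis \( n>\gamma+\delta-2\alpha+1 \) is exactly the inequality \( \alpha+(n-\gamma-\delta-1)/2>0 \), i.e.\ \( c>b>0 \); this is simultaneously what makes \( (1-\rho^{2}) \) integrable at \( \rho=\pm1 \) and what validates Euler's representation, and together with \( |r\rho|<1 \) it justifies the interchange of summation and integration by dominated convergence. The step I expect to be the real obstacle is the recognition of the folded integral as Euler's representation at the non-standard argument \( z=-1 \) and the exact alignment of the three \( {}_2F_1 \) parameters with the stated \( a_{k,m} \) and \( b_{k,m} \); this bookkeeping, rather than any deep idea, is where a slip is easiest. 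Finally, the closing assertion that the series are hypergeometric when \( \beta \) is a non-negative integer I would settle by the ratio test for hypergeometric series: one checks that the quotient of consecutive coefficients is a rational function of \( m \), the relevant rationality coming from the terminating binomial expansion of \( (1+\rho^{2})^{\beta/2} \) (equivalently the termination of the embedded \( {}_2F_1(-\beta/2,\cdot\,;\cdot\,;-1) \)), which reduces each moment to a finite combination of beta integrals and hence to a generalized hypergeometric series in \( r^{2} \).
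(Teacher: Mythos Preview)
Your proposal is correct and follows essentially the same route as the paper: term-wise integration of the Gauss series in \(A_{\gamma,\delta}\) and \(B_{\gamma,\delta}\), the substitution \(x=\rho^{2}\), and evaluation of the resulting integral \(\int_{0}^{1}x^{\nu-1}(1-x)^{\mu-1}(1+x)^{\beta/2}\,\der x\) as \(\Bc(\nu,\mu)\,{}_{2}F_{1}(-\tfrac{\beta}{2},\nu;\mu+\nu;-1)\). The paper cites this last identity as Eq.~(3.197.8) in Gradshteyn--Ryzhik, whereas you obtain it directly from Euler's integral representation of \({}_{2}F_{1}\); these are the same step, and your identification of the hypothesis \(n>\gamma+\delta-2\alpha+1\) with the positivity condition \(\mu=\alpha+\tfrac{n-\gamma-\delta-1}{2}>0\) needed for that representation is exactly right.
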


\begin{proof}
The series \(  E(\rho^{k}  \, | \, n, r) \) result from term-wise integration of the hypergeometric functions in \( A_{\gamma, \delta} \) and \( B_{\gamma, \delta} \). The assumption \( n > \gamma +\delta - 2\alpha+1 \) and the substitution \( x=\rho^{2} \) allows us to solve these integrals using Eq.~{(3.197.8)} in \citet[p. 317]{gradshteyn2007table} with their \( \tilde{\alpha}=1 \), \( u=1 \), \( \lambda=\tfrac{\beta}{2} \), \( \mu=\alpha + \tfrac{n - \gamma - \delta -1}{2} \) and \( \nu=\tfrac{1}{2} + \tfrac{k+2m}{2} \) when \( k \) is even, while we use \( \nu= \tfrac{1}{2} + \tfrac{k+2m+1}{2} \) when \( k \) is odd. A direct application of the ratio test shows that the series converge when \( |r| < 1 \). 
\end{proof}

\section{Analytic Posteriors for the Case $\beta=0$}
For most of the priors discussed above we have \( \beta=0 \), which leads to the following simplification of the posterior. 

\begin{coro}[Characterization of the Marginal Posteriors of $\rho$, when $\beta=0$]
If \( n > \gamma + \delta - 2 \alpha + 1 \) and \( |r| < 1 \), then the marginal posterior for \( \rho \) is 
\begin{align}
\label{posteriorRho}
\pi(\rho  \, | \, n, r) & = \frac{ (1-\rho^{2})^{\tfrac{2 \alpha+n-\gamma-\delta-3}{2}}   }{  p^{\gamma, \delta}_{\alpha}(n, r) \Bc \big (\tfrac{1}{2}, \alpha \big )} \\ 
\nonumber
\times & \Big [ {_{2} F}_{1}  ( \tfrac{n-\gamma-1}{2}, \tfrac{n-\delta - 1}{2}  \, ; \,  \tfrac{1}{2}  \, ; \, r^{2} \rho^{2} ) + 2 r \rho W_{\gamma, \delta}(n)  {_{2} F}_{1}  ( \tfrac{n-\gamma}{2}, \tfrac{n-\delta}{2}  \, ; \,  \tfrac{3}{2}  \, ; \, r^{2} \rho^{2} ) \Big ],
\end{align}
where \( p^{\gamma, \delta}_{\alpha}(n, r) \) refers to the normalizing constant of the (marginal) posterior of \( \rho \), which is given by
\begin{align}
\nonumber
p^{\gamma, \delta}_{\alpha}(n, r) = \Bc \big (\tfrac{1}{2}, \alpha + \tfrac{n - \gamma -\delta -1}{2} \big  ) {_{2} F}_{1}  \big ( \tfrac{n-\gamma -1}{2}, \tfrac{n-\delta -1}{2}  \, ; \,  \alpha + \tfrac{n-\gamma - \delta}{2}  \, ; \, r^{2} \big )  / \Bc \big (\tfrac{1}{2}, \alpha \big ).
\end{align}
More generally, when \( \beta=0 \), the \( k \)th posterior moment is 
\begin{align}
\nonumber
\frac{\Bc \big (  \tfrac{1}{2} + \tfrac{k}{2}, \alpha + \tfrac{n - \gamma -\delta -1}{2} \big ) {_{3} F}_{2} \big  ( \tfrac{k+1}{2}, \tfrac{n-\gamma -1}{2}, \tfrac{n-\delta -1}{2}  \, ; \,  \tfrac{1}{2}, \tfrac{k+2 \alpha + n-\gamma - \delta }{2}  \, ; \, r^{2} \big ) }{\Bc \big ( \tfrac{1}{2}, \alpha  + \tfrac{n - \gamma -\delta -1}{2} \big ) {_{2} F}_{1} \big  ( \tfrac{n-\gamma -1}{2}, \tfrac{n-\delta -1}{2}  \, ; \,  \tfrac{2 \alpha + n-\gamma - \delta}{2}  \, ; \, r^{2} \big )},
\end{align}
when \( k \) is even, and 
\begin{align}
\nonumber
2 r W_{\gamma, \delta}(n)  \frac{ \Bc \big ( \tfrac{1}{2} + \tfrac{k+1}{2}, \alpha + \tfrac{n - \gamma -\delta -1}{2} \big ) {_{3} F}_{2}  \big ( \tfrac{k+2}{2}, \tfrac{n-\gamma }{2}, \tfrac{n-\delta }{2}  \, ; \,  \tfrac{3}{2}, \tfrac{k+2 \alpha + n-\gamma - \delta+1}{2}  \, ; \, r^{2} \big )} { \Bc \big (\tfrac{1}{2}, \alpha + \tfrac{ n - \gamma -\delta -1}{2} \big ) {_{2} F}_{1}  \big ( \tfrac{n-\gamma -1}{2}, \tfrac{n-\delta -1}{2}  \, ; \,   \tfrac{2 \alpha + n-\gamma - \delta}{2}  \, ; \, r^{2} \big )},
\end{align}
when \( k \) is odd. \( \hfill \diamond \)
\end{coro}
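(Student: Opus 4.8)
The plan is to feed the reduced likelihood $h_{\gamma,\delta}=A_{\gamma,\delta}+B_{\gamma,\delta}$ of the main theorem into the marginal posterior and then to carry out the same term-by-term integration used in the preceding corollary, which at $\beta=0$ becomes elementary because the prior loses its $(1+\rho^{2})^{\beta/2}$ factor. First I would record the degeneration of the prior: since $_2F_1(0,\tfrac12;\tfrac12+\alpha;-1)=1$ (the series terminates at its constant term), one has $C_{\alpha,0}=\Bc(\tfrac12,\alpha)$ and $\pi_{\alpha,0}(\rho)=(1-\rho^{2})^{\alpha-1}/\Bc(\tfrac12,\alpha)$, the stretched beta density. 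Multiplying $h_{\gamma,\delta}$ by this prior and dividing by the normalising constant then yields \refEq{posteriorRho} at once: the power $\tfrac{n-\gamma-\delta-1}{2}$ carried by both $A_{\gamma,\delta}$ and $B_{\gamma,\delta}$ combines with the prior's $\alpha-1$ into the single exponent $\tfrac{2\alpha+n-\gamma-\delta-3}{2}$, while the bracketed sum of the two Gauss functions is inherited unchanged from the theorem.

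For the normalising constant and the moments I would expand the Gauss hypergeometric functions in $A_{\gamma,\delta}$ and $B_{\gamma,\delta}$ as power series in $r^{2}\rho^{2}$ and integrate term by term against $\rho^{k}\pi_{\alpha,0}(\rho)$ over $(-1,1)$. By parity $A_{\gamma,\delta}\pi_{\alpha,0}$ is even and $B_{\gamma,\delta}\pi_{\alpha,0}$ is odd, so the $A$-series supplies only the even moments and the $B$-series only the odd ones — the case split inherited from the preceding corollary — and the normalising constant is the $A$-integral alone. Each term reduces to the elementary beta integral $\int_{-1}^{1}\rho^{2p}(1-\rho^{2})^{\nu-1}\der\rho=\Bc(p+\tfrac12,\nu)$ with $\nu=\alpha+\tfrac{n-\gamma-\delta-1}{2}$. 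Here the hypothesis $n>\gamma+\delta-2\alpha+1$ is exactly the statement $\nu>0$ that makes these integrals converge, while $|r|<1$ secures convergence of the series by the ratio test.

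The final step is to recognise the resulting $r^{2}$-series as hypergeometric. Writing the beta factor as $\Bc(x+m,\nu)=\Bc(x,\nu)\,(x)_{m}/(x+\nu)_{m}$ — with $x=\tfrac{k+1}{2}$ for even $k$ and $x=\tfrac{k+2}{2}$ for odd $k$ — pulls the $m=0$ beta out in front and turns the remaining sum over $m$ into a $_3F_2$ at argument $r^{2}$, the factor $(x)_{m}$ supplying a new upper parameter and $(x+\nu)_{m}$ a new lower parameter. The shifted lower parameters $x+\nu$ work out to $\tfrac{k+2\alpha+n-\gamma-\delta}{2}$ (even) and $\tfrac{k+2\alpha+n-\gamma-\delta+1}{2}$ (odd), matching the asserted $_3F_2$'s. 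Taking the even case at $k=0$ gives the normalising constant: the repeated parameter $\tfrac12$ in the upper and lower lists cancels and the $_3F_2$ collapses to the stated $_2F_1$ with lower parameter $\alpha+\tfrac{n-\gamma-\delta}{2}$. Dividing the even and odd numerators by this normalising constant, the prefactors $\Bc(\tfrac12,\alpha)$ cancel and one recovers exactly the two displayed moment formulas; as a check, $k=0$ collapses the numerator $_3F_2$ back to the denominator $_2F_1$ and returns $E(\rho^{0}\,|\,n,r)=1$.

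I expect the only real obstacle to be bookkeeping rather than analysis: keeping the six hypergeometric parameters straight through the Pochhammer re-indexing, and being careful about the two places where a $_3F_2$ legitimately collapses to a $_2F_1$ (the normalising constant and the $k=0$ check). The single genuinely analytic point, interchanging summation and integration, is routine, justified by absolute and locally uniform convergence once $|r|<1$ and $\nu>0$; alternatively one can bypass the explicit resummation by invoking the Euler-type integral that expresses $\int_{0}^{1}x^{\mu-1}(1-x)^{\nu-1}\,_2F_1(a,b;c;zx)\,\der x$ as a $_3F_2$ (see the tables of Gradshteyn and Ryzhik), which I would present as the cleaner shortcut.
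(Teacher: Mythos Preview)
Your proposal is correct and matches the paper's approach. The paper's proof is simply the terse version of what you describe: after the substitution $x=\rho^{2}$ it invokes Eq.~(7.513.12) of Gradshteyn--Ryzhik (the very Euler-type ${}_2F_1\!\to{}_3F_2$ integral you mention as the ``cleaner shortcut'') with $\mu=\alpha+\tfrac{n-\gamma-\delta-1}{2}$ and $\nu=\tfrac12+\tfrac{k}{2}$ or $\nu=\tfrac12+\tfrac{k+1}{2}$, then reads off the normalising constant as the $k=0$ case; your term-by-term beta-integral derivation is exactly what that tabulated identity encapsulates.
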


\begin{proof}
The assumption \( n > \gamma + \delta - 2 \alpha +1 \) and the substitution \( x=\rho^{2} \) allows us to use Eq.~{(7.513.12)} in \citet[p. 814]{gradshteyn2007table} with \( \mu= \alpha + \tfrac{ n - \gamma - \delta -1}{2} \) and  \( \nu= \tfrac{1}{2} + \tfrac{k}{2} \) when \( k \) is even, while we use \( \nu=\tfrac{1}{2} + \tfrac{k+1}{2} \) when \( k \) is odd. The normalizing constant of the posterior \( p_{\alpha}^{\gamma,\delta}(n, r) \) is a special case with \( k=0 \).
\end{proof}

The marginal posterior for \( \rho \) updated from the generalized Wishart prior, the right-Haar prior and Jeffreys's recommendation then follow from a direct substitution of the values for \( \alpha,\gamma \) and \( \delta \) as discussed under \refEq{corPrior}. Lindley's reference posterior for \( \rho \) is given by %
\begin{align}
\nonumber
\frac{   \, {_{2} F_{1}}  \big ( \tfrac{n-1}{2}, \tfrac{n-1}{2} \, ; \, \tfrac{1}{2} \, ; \, r^{2} \rho^{2} \big ) +  2 r \rho W_{0,0}(n)   \, {_{2} F_{1}}  \big ( \tfrac{n}{2}, \tfrac{n}{2} \, ; \, \tfrac{3}{2} \, ; \, r^{2} \rho^{2} \big ) }{  \Bc \big ( \tfrac{1}{2}, \tfrac{n-1}{2} \big ) \, {_{2} F_{1}}  \big ( \tfrac{n-1}{2}, \tfrac{n-1}{2} \, ; \, \tfrac{n}{2} \, ; \, r^{2} \big )} (1- \rho^2)^{\tfrac{n-3}{2}},
\end{align}
which follows from \refEq{posteriorRho} by setting \( \gamma=\delta=0 \) and, subsequently, letting \( \alpha \rightarrow 0 \). 

Lastly, for those who wish to sample from the posterior distribution, we suggest the use of an independence-chain Metropolis algorithm (IMH; \citealp{tierney1994markov}) using Lindley's normal approximation of the posterior of \( \tanh^{-1}(\rho) \) as the proposal. %
This method could be used when Pearson's correlation is embedded within a hierarchical model, as the posterior for \( \rho \) will then be a full conditional distribution within a Gibbs sampler. %
For \( \alpha = 1 \), \( \beta = \gamma =\delta = 0 \), \( n=10 \) observations and \( r = 0.6 \), the 	acceptance rate of the IMH algorithm was already well above 75\%, suggesting a fast convergence of the Markov chain. For \( n \) larger, the acceptance rate further increases. The R code for the independence-chain Metropolis algorithm can be found on the first author's home page. In addition, this analysis is also implemented in the open-source software package JASP (\printead{u1}).

\appendix

\section{A Lemma distilled from the Bateman Project}
\label{appBateman}
\begin{lemma}
\label{batemanLemma}
For \( a, c > 0  \) the following equality holds
\begin{align}
\int_{0}^{\infty} u^{c-1} \exp \big ( -a u^2 - b u \big )  \text{d} u =  2^{-1} a^{-\tfrac{c}{2}} \Big [ \mathring{A}(a, b, c) +\mathring{B}(a, b, c) \Big ],
\end{align}
that is, the integral is solved by the functions 
\begin{align}
\mathring{A}(a,b,c) = &  \Gamma \big ( \tfrac{c}{2} \big ) {_1 F}_{1} \big ( \tfrac{c}{2} \, ; \, \tfrac{1}{2} \, ; \, \tfrac{b^2}{4 a} \big ), \\
\nonumber
\mathring{B}(a,b,c) = & - \tfrac{b}{\sqrt{a}} \Gamma \big ( \tfrac{c+1}{2} \big ) {_1 F}_{1} \big ( \tfrac{c+1}{2} \, ; \, \tfrac{3}{2} \, ; \, \tfrac{b^2}{4 a} \big ),
\end{align}
which define the even and odd solutions to Weber's differential equation in the variable \( z=\tfrac{b}{\sqrt{2 a}} \) respectively. \( \hfill \diamond \)
\end{lemma}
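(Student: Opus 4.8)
The plan is to avoid any appeal to the cited Bateman tables and instead prove the identity directly, by expanding the factor \( \exp(-bu) \) in its Maclaurin series and integrating term by term against the Gaussian weight \( u^{c-1} \exp(-a u^{2}) \). Writing \( \exp(-bu) = \sum_{j=0}^{\infty} \tfrac{(-b)^{j}}{j!} u^{j} \), the integral becomes, at least formally, \( \sum_{j=0}^{\infty} \tfrac{(-b)^{j}}{j!} \int_{0}^{\infty} u^{c+j-1} \exp(-a u^{2}) \der u \). The inner integrals are pure Gaussian moments, which the substitution \( t = a u^{2} \) turns into gamma functions:

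\begin{align}
\nonumber
\int_{0}^{\infty} u^{c+j-1} \exp(-a u^{2}) \der u = \tfrac{1}{2} a^{-\tfrac{c+j}{2}} \Gamma \big ( \tfrac{c+j}{2} \big ).
\end{align}

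Collecting the powers of \( a \), this assembles into the single series \( \tfrac{1}{2} a^{-c/2} \sum_{j=0}^{\infty} \tfrac{1}{j!} \big ( \tfrac{-b}{\sqrt{a}} \big )^{j} \Gamma \big ( \tfrac{c+j}{2} \big ) \), so the whole task reduces to recognising this sum as \( \mathring{A} + \mathring{B} \).

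Second, I would split the sum over \( j \) into its even part \( j = 2m \) and its odd part \( j = 2m+1 \); these will produce \( \mathring{A} \) and \( \mathring{B} \) respectively. The key algebraic ingredients are the duplication identities for the Pochhammer symbol, namely \( (2m)! = 2^{2m} m! \, \big ( \tfrac{1}{2} \big )_{m} \) and \( (2m+1)! = 2^{2m} m! \, \big ( \tfrac{3}{2} \big )_{m} \), together with \( \Gamma \big ( \tfrac{c}{2} + m \big ) = \Gamma \big ( \tfrac{c}{2} \big ) \big ( \tfrac{c}{2} \big )_{m} \) and the analogous relation at \( \tfrac{c+1}{2} \). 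Substituting these, the even part becomes \( \Gamma \big ( \tfrac{c}{2} \big ) \sum_{m} \tfrac{ (c/2)_{m}}{ (1/2)_{m} \, m!} \big ( \tfrac{b^{2}}{4a} \big )^{m} \), which is exactly \( \Gamma \big ( \tfrac{c}{2} \big ) {_1 F}_{1} \big ( \tfrac{c}{2} \, ; \, \tfrac{1}{2} \, ; \, \tfrac{b^{2}}{4a} \big ) = \mathring{A} \), and the odd part collapses similarly to \( \mathring{B} \), with the single extra factor \( -\tfrac{b}{\sqrt{a}} \) coming from the leading \( j = 2m+1 \) term (the even powers \( (-b/\sqrt{a})^{2m} \) lose their sign, whereas the odd powers retain it).

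The one genuine analytic point — and the step I expect to be the main obstacle — is justifying the interchange of summation and integration. Because the series alternates in sign, I would secure \emph{absolute} convergence rather than rely on any monotonicity: by Tonelli the interchange is legitimate provided \( \int_{0}^{\infty} u^{c-1} \exp(-a u^{2}) \exp(|b| u) \der u < \infty \), and this holds for every \( a > 0 \), every real \( b \), and every \( c > 0 \), since near \( u = 0 \) the integrand is \( O(u^{c-1}) \), which is integrable, while for large \( u \) the Gaussian factor \( \exp(-a u^{2}) \) dominates \( \exp(|b| u) \). Once this interchange is in place, the remaining manipulations are purely formal. Finally, the parenthetical claim that \( \mathring{A} \) and \( \mathring{B} \) are the even and odd solutions of Weber's equation in \( z = b/\sqrt{2a} \) is not needed for the identity itself and can be verified, if desired, by direct substitution of the two \( {_1 F}_{1} \) series into the parabolic-cylinder equation.
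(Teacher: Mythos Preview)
Your argument is correct and complete: the Tonelli step legitimises the interchange, the Gaussian moments are computed correctly, and the Pochhammer identities you quote turn the even and odd subseries into exactly \( \mathring{A} \) and \( \mathring{B} \).

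The route, however, is genuinely different from the paper's. The paper does not compute the integral from scratch; it quotes a table entry from the Bateman project expressing \( \int_{0}^{\infty} u^{c-1} e^{-a u^{2} - b u}\,\der u \) as \( (2a)^{-c/2}\Gamma(c)\,e^{b^{2}/(8a)} D_{-c}\!\big(\tfrac{b}{\sqrt{2a}}\big) \) in terms of Whittaker's parabolic cylinder function, then invokes a second tabulated identity decomposing \( D_{\lambda}(z) \) into its even and odd parts, and finishes with Legendre's duplication formula for the gamma function. Your approach bypasses the parabolic cylinder function entirely: by expanding \( e^{-bu} \) and integrating term by term you land directly on the two \( {_1F}_{1} \) series, with the duplication formula appearing only implicitly through the factorial identities \( (2m)! = 2^{2m} m!\,(\tfrac{1}{2})_{m} \) and \( (2m+1)! = 2^{2m} m!\,(\tfrac{3}{2})_{m} \). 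What this buys you is a fully self-contained proof that does not lean on special-function tables and that makes the analytic justification (the Tonelli bound) explicit; what the paper's version buys is brevity and a conceptual link to Weber's equation, since the even/odd split is inherited from the known structure of \( D_{\lambda} \) rather than rediscovered.
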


\begin{proof}
By \citet[p 313, Eq.~{13}]{bateman1954integral1} we note that, 
\begin{align}
\label{lemma1a}
\int_{0}^{\infty} u^{c-1} \exp \big ( -a u^2 - b u \big )  \text{d} v =  (2 a)^{\tfrac{-c}{2}} \Gamma (c) \exp \big ( \tfrac{b^{2}}{8 a} \big ) D_{-c} \big ( \tfrac{b}{\sqrt{2 a}} \big ) ,
\end{align}
where \( D_{\lambda} (z) \) is Whittaker's \citeyearpar{whittaker1902functions} parabolic cylinder function \citep{abramowitz1972handbook}. By virtue of Eq.~{4} on p. 117 of \citet{bateman1953higher2}, we can decompose \( D_{\lambda} (z) \) into a sum of an even and odd function. Replacing this decomposition for \( D_{\lambda}(z) \) in \refEq{lemma1a} and an application of the duplication formula of the gamma function yields the statement. 
\end{proof}



\bibliographystyle{imsart-nameyear}
\bibliography{sunBergerCorrie}

\newcommand{\noop}[1]{}
\begin{thebibliography}{24}

\bibitem[\protect\citeauthoryear{Abramowitz and
  Stegun}{1992}]{abramowitz1972handbook}
\begin{bbook}[author]
\beditor{\bsnm{Abramowitz},~\bfnm{Milton}\binits{M.}} \AND
  \beditor{\bsnm{Stegun},~\bfnm{Irene~A.}\binits{I.~A.}}, eds.
(\byear{1992}).
\btitle{Handbook of mathematical functions with formulas, graphs, and
  mathematical tables}.
\bpublisher{Dover Publications, Inc., New York}
\bnote{Reprint of the 1972 edition}.
\bmrnumber{1225604 (94b:00012)}
\end{bbook}
\endbibitem

\bibitem[\protect\citeauthoryear{Bayarri}{1981}]{bayarri1981inferencia}
\begin{barticle}[author]
\bauthor{\bsnm{Bayarri},~\bfnm{Maria~J.}\binits{M.~J.}}
(\byear{1981}).
\btitle{Bayesian inference on the correlation coefficient of a bivariate normal
  population}.
\bjournal{Trabajos de Estad\'\i stica y de Investigaci\'on Operativa}
\bvolume{32}
\bpages{18--31}.
\bmrnumber{697200 (84i:62047)}
\end{barticle}
\endbibitem

\bibitem[\protect\citeauthoryear{Berger, Bernardo and
  Sun}{2015}]{berger2015overall}
\begin{barticle}[author]
\bauthor{\bsnm{Berger},~\bfnm{James~O}\binits{J.~O.}},
  \bauthor{\bsnm{Bernardo},~\bfnm{Jose~M.}\binits{J.~M.}} \AND
  \bauthor{\bsnm{Sun},~\bfnm{Dongchu}\binits{D.}}
(\byear{2015}).
\btitle{Overall objective priors}.
\bjournal{Bayesian Anal.}
\bvolume{10}
\bpages{189--221}.
\end{barticle}
\endbibitem

\bibitem[\protect\citeauthoryear{Berger and Sun}{2008}]{berger2008objective}
\begin{barticle}[author]
\bauthor{\bsnm{Berger},~\bfnm{James~O.}\binits{J.~O.}} \AND
  \bauthor{\bsnm{Sun},~\bfnm{Dongchu}\binits{D.}}
(\byear{2008}).
\btitle{Objective priors for the bivariate normal model}.
\bjournal{The Annals of Statistics}
\bvolume{36}
\bpages{963--982}.
\bdoi{10.1214/07-AOS501}
\bmrnumber{2396821 (2009b:62058)}
\end{barticle}
\endbibitem

\bibitem[\protect\citeauthoryear{Bernardo}{2015}]{bernardo2015obayes}
\begin{bunpublished}[author]
\bauthor{\bsnm{Bernardo},~\bfnm{Jos{\'e}~M}\binits{J.~M.}}
(\byear{2015}).
\btitle{An overall prior for the five-parameter normal distribution}.
\bnote{11th International Workshop on Objective Bayes Methodology dedicated to
  Susie Bayarri}.
\end{bunpublished}
\endbibitem

\bibitem[\protect\citeauthoryear{Dawid, Stone and
  Zidek}{1973}]{dawid1973marginalization}
\begin{barticle}[author]
\bauthor{\bsnm{Dawid},~\bfnm{A.~P.}\binits{A.~P.}},
  \bauthor{\bsnm{Stone},~\bfnm{M.}\binits{M.}} \AND
  \bauthor{\bsnm{Zidek},~\bfnm{J.~V.}\binits{J.~V.}}
(\byear{1973}).
\btitle{Marginalization paradoxes in {B}ayesian and structural inference}.
\bjournal{Journal of the Royal Statistical Society. Series B. Methodological}
\bvolume{35}
\bpages{189--233}.
\bnote{With discussion by D. J. Bartholomew, A. D. McLaren, D. V. Lindley,
  Bradley Efron, J. Dickey, G. N. Wilkinson, A. P.Dempster, D. V. Hinkley, M.
  R. Novick, Seymour Geisser, D. A. S. Fraser and A. Zellner, and a reply by A.
  P. Dawid, M. Stone, and J. V. Zidek}.
\bmrnumber{0365805 (51 \#\#2057)}
\end{barticle}
\endbibitem

\bibitem[\protect\citeauthoryear{Erd{\'e}lyi
  et~al.}{1954}]{bateman1954integral1}
\begin{bbook}[author]
\bauthor{\bsnm{Erd{\'e}lyi},~\bfnm{A.}\binits{A.}},
  \bauthor{\bsnm{Magnus},~\bfnm{W.}\binits{W.}},
  \bauthor{\bsnm{Oberhettinger},~\bfnm{F.}\binits{F.}} \AND
  \bauthor{\bsnm{Tricomi},~\bfnm{F.~G.}\binits{F.~G.}}
(\byear{1954}).
\btitle{Tables of integral transforms. {V}ol. {I}}.
\bpublisher{McGraw-Hill Book Company, Inc., New York-Toronto-London}
\bnote{Based, in part, on notes left by Harry Bateman}.
\bmrnumber{0061695 (15,868a)}
\end{bbook}
\endbibitem

\bibitem[\protect\citeauthoryear{Erd{\'e}lyi et~al.}{1981}]{bateman1953higher2}
\begin{bbook}[author]
\bauthor{\bsnm{Erd{\'e}lyi},~\bfnm{Arthur}\binits{A.}},
  \bauthor{\bsnm{Magnus},~\bfnm{Wilhelm}\binits{W.}},
  \bauthor{\bsnm{Oberhettinger},~\bfnm{Fritz}\binits{F.}} \AND
  \bauthor{\bsnm{Tricomi},~\bfnm{Francesco~G.}\binits{F.~G.}}
(\byear{1981}).
\btitle{Higher transcendental functions. {V}ol. {II}}.
\bpublisher{Robert E. Krieger Publishing Co., Inc., Melbourne, Fla.}
\bnote{Based on notes left by Harry Bateman, Reprint of the 1953 original}.
\bmrnumber{698780 (84h:33001b)}
\end{bbook}
\endbibitem

\bibitem[\protect\citeauthoryear{Fisher}{1915}]{fisher1915frequency}
\begin{barticle}[author]
\bauthor{\bsnm{Fisher},~\bfnm{Ronald~A}\binits{R.~A.}}
(\byear{1915}).
\btitle{Frequency distribution of the values of the correlation coefficient in
  samples from an indefinitely large population}.
\bjournal{Biometrika}
\bpages{507--521}.
\end{barticle}
\endbibitem

\bibitem[\protect\citeauthoryear{Fisher}{1920}]{fisher1920mathematical}
\begin{barticle}[author]
\bauthor{\bsnm{Fisher},~\bfnm{Ronald~Aylmer}\binits{R.~A.}}
(\byear{1920}).
\btitle{A Mathematical Examination of the Methods of Determining the Accuracy
  of an Observation by the Mean Error, and by the Mean Square Error.}
\bjournal{Mon. Not. R. Astron. Soc.}
\bvolume{80}
\bpages{758--770}.
\end{barticle}
\endbibitem

\bibitem[\protect\citeauthoryear{Fisher}{1921}]{fisher1921probable}
\begin{barticle}[author]
\bauthor{\bsnm{Fisher},~\bfnm{Ronald~Aylmer}\binits{R.~A.}}
(\byear{1921}).
\btitle{On the ``Probable Error" of a Coefficient of Correlation Deduced from a
  Small Sample.}
\bjournal{Metron}
\bvolume{1}
\bpages{3--32}.
\end{barticle}
\endbibitem

\bibitem[\protect\citeauthoryear{Fisher}{1922}]{fisher1922mathematical}
\begin{barticle}[author]
\bauthor{\bsnm{Fisher},~\bfnm{Ronald~A}\binits{R.~A.}}
(\byear{1922}).
\btitle{On the Mathematical Foundations of Theoretical Statistics}.
\bjournal{Phil. Trans. R. Soc. A}
\bvolume{222}
\bpages{309--368}.
\end{barticle}
\endbibitem

\bibitem[\protect\citeauthoryear{Fraser}{1961}]{fraser1961fiducial}
\begin{barticle}[author]
\bauthor{\bsnm{Fraser},~\bfnm{D.~A.~S.}\binits{D.~A.~S.}}
(\byear{1961}).
\btitle{On fiducial inference}.
\bjournal{Annals of Mathematical Statistics}
\bvolume{32}
\bpages{661--676}.
\bmrnumber{0130755 (24 \#\#A614)}
\end{barticle}
\endbibitem

\bibitem[\protect\citeauthoryear{Gradshteyn and
  Ryzhik}{2007}]{gradshteyn2007table}
\begin{bbook}[author]
\bauthor{\bsnm{Gradshteyn},~\bfnm{I.~S.}\binits{I.~S.}} \AND
  \bauthor{\bsnm{Ryzhik},~\bfnm{I.~M.}\binits{I.~M.}}
(\byear{2007}).
\btitle{Table of integrals, series, and products},
\bedition{Seventh} ed.
\bpublisher{Elsevier/Academic Press, Amsterdam}
\bnote{Translated from the Russian, Translation edited and with a preface by
  Alan Jeffrey and Daniel Zwillinger, With one CD-ROM (Windows, Macintosh and
  UNIX)}.
\bmrnumber{2360010 (2008g:00005)}
\end{bbook}
\endbibitem

\bibitem[\protect\citeauthoryear{Hannig, Iyer and
  Patterson}{2006}]{hannig2006fiducial}
\begin{barticle}[author]
\bauthor{\bsnm{Hannig},~\bfnm{Jan}\binits{J.}},
  \bauthor{\bsnm{Iyer},~\bfnm{Hari}\binits{H.}} \AND
  \bauthor{\bsnm{Patterson},~\bfnm{Paul}\binits{P.}}
(\byear{2006}).
\btitle{Fiducial generalized confidence intervals}.
\bjournal{J. Amer. Statist. Assoc.}
\bvolume{101}
\bpages{254--269}.
\end{barticle}
\endbibitem

\bibitem[\protect\citeauthoryear{Jeffreys}{1935}]{jeffreys1935some}
\begin{barticle}[author]
\bauthor{\bsnm{Jeffreys},~\bfnm{Harold}\binits{H.}}
(\byear{1935}).
\btitle{Some tests of significance, treated by the theory of probability}.
\bjournal{Pro. Cambridge Philos. Soc.}
\bvolume{31}
\bpages{203--222}.
\end{barticle}
\endbibitem

\bibitem[\protect\citeauthoryear{Jeffreys}{1961}]{jeffreys1961theory}
\begin{bbook}[author]
\bauthor{\bsnm{Jeffreys},~\bfnm{Harold}\binits{H.}}
(\byear{1961}).
\btitle{Theory of probability},
\bedition{third} ed.
\bseries{Oxford Science Publications}.
\bpublisher{The Clarendon Press, Oxford University Press, New York}.
\bmrnumber{745621 (85f:60005)}
\end{bbook}
\endbibitem

\bibitem[\protect\citeauthoryear{Lindley}{1965}]{lindley1965introduction}
\begin{bbook}[author]
\bauthor{\bsnm{Lindley},~\bfnm{D.~V.}\binits{D.~V.}}
(\byear{1965}).
\btitle{Introduction to probability and statistics from a {B}ayesian viewpoint.
  {P}art {II}: {I}nference}.
\bpublisher{Cambridge University Press, New York}.
\bmrnumber{0168084 (29 \#\#5349)}
\end{bbook}
\endbibitem

\bibitem[\protect\citeauthoryear{Ly et~al.}{2016}]{ly2016tutorial}
\begin{barticle}[author]
\bauthor{\bsnm{Ly},~\bfnm{Alexander}\binits{A.}},
  \bauthor{\bsnm{Marsman},~\bfnm{Maarten}\binits{M.}},
  \bauthor{\bsnm{Verhagen},~\bfnm{A.~J.}\binits{A.~J.}},
  \bauthor{\bsnm{Grasman},~\bfnm{Raoul P. P.~P.}\binits{R.~P. P.~P.}} \AND
  \bauthor{\bsnm{Wagenmakers},~\bfnm{E.~J.}\binits{E.~J.}}
(\byear{2016}).
\btitle{A tutorial on {F}isher information}.
\bjournal{Manuscript submitted for publication at the Journal of Mathematical
  Psychology}.
\end{barticle}
\endbibitem

\bibitem[\protect\citeauthoryear{Robert, Chopin and
  Rousseau}{2009}]{robert2009harold}
\begin{barticle}[author]
\bauthor{\bsnm{Robert},~\bfnm{Christian~P.}\binits{C.~P.}},
  \bauthor{\bsnm{Chopin},~\bfnm{Nicolas}\binits{N.}} \AND
  \bauthor{\bsnm{Rousseau},~\bfnm{Judith}\binits{J.}}
(\byear{2009}).
\btitle{Harold {J}effreys's theory of probability revisited}.
\bjournal{Statistical Science. A Review Journal of the Institute of
  Mathematical Statistics}
\bvolume{24}
\bpages{141--172}.
\bdoi{10.1214/09-STS284}
\bmrnumber{2655841 (2011c:62002)}
\end{barticle}
\endbibitem

\bibitem[\protect\citeauthoryear{Stigler}{2007}]{stigler2007epic}
\begin{barticle}[author]
\bauthor{\bsnm{Stigler},~\bfnm{Stephen~M.}\binits{S.~M.}}
(\byear{2007}).
\btitle{The epic story of maximum likelihood}.
\bjournal{Statistical Science. A Review Journal of the Institute of
  Mathematical Statistics}
\bvolume{22}
\bpages{598--620}.
\bdoi{10.1214/07-STS249}
\bmrnumber{2410255}
\end{barticle}
\endbibitem

\bibitem[\protect\citeauthoryear{Sun and Berger}{2007}]{sun2007objective}
\begin{bincollection}[author]
\bauthor{\bsnm{Sun},~\bfnm{Dongchu}\binits{D.}} \AND
  \bauthor{\bsnm{Berger},~\bfnm{James~O.}\binits{J.~O.}}
(\byear{2007}).
\btitle{Objective {B}ayesian analysis for the multivariate normal model}.
In \bbooktitle{Bayesian statistics 8}.
\bseries{Oxford Sci. Publ.}
\bpages{525--562}.
\bpublisher{Oxford Univ. Press, Oxford}.
\bmrnumber{2433206 (2009m:62019)}
\end{bincollection}
\endbibitem

\bibitem[\protect\citeauthoryear{Tierney}{1994}]{tierney1994markov}
\begin{barticle}[author]
\bauthor{\bsnm{Tierney},~\bfnm{Luke}\binits{L.}}
(\byear{1994}).
\btitle{Markov chains for exploring posterior distributions}.
\bjournal{The Annals of Statistics}
\bvolume{22}
\bpages{1701--1762}.
\bnote{With discussion and a rejoinder by the author}.
\bdoi{10.1214/aos/1176325750}
\bmrnumber{1329166 (96m:60150)}
\end{barticle}
\endbibitem

\bibitem[\protect\citeauthoryear{Whittaker}{1902}]{whittaker1902functions}
\begin{barticle}[author]
\bauthor{\bsnm{Whittaker},~\bfnm{E.~T.}\binits{E.~T.}}
(\byear{1902}).
\btitle{On the Functions associated with the Parabolic Cylinder in Harmonic
  Analysis}.
\bjournal{Proceedings of the London Mathematical Society}
\bvolume{S1-35}
\bpages{417}.
\bdoi{10.1112/plms/s1-35.1.417}
\bmrnumber{1577011}
\end{barticle}
\endbibitem

\end{thebibliography}
\end{document}